\documentclass[a4paper, 12pt, reqno]{amsart}

\usepackage[english]{babel}

\usepackage{amsfonts, amsmath, amssymb, amsthm}
\usepackage[shortlabels]{enumitem}
\setlist[enumerate]{label=\normalfont{(\arabic*)}}
\usepackage[colorlinks=true, linkcolor=blue, citecolor=blue]{hyperref}
\usepackage{stmaryrd}
\usepackage{tikz-cd}

\usepackage{xifthen}

\advance\hoffset-20mm
\advance\textwidth40mm

\newcommand{\bb}{\mathbb}

\newcommand{\der}{\partial}
\newcommand{\Ga}{\bb{G}_\mathrm{a}}

\DeclareMathOperator{\A}{A}
\DeclareMathOperator{\Cl}{Cl}

\DeclareMathOperator{\Fix}{Fix}
\DeclareMathOperator{\mIm}{Im}
\renewcommand{\Im}{\mIm}
\DeclareMathOperator{\Ker}{Ker}
\DeclareMathOperator{\LND}{LND}
\DeclareMathOperator{\pl}{pl}
\DeclareMathOperator{\Spec}{Spec}

\newcommand{\insertText}[2][]{%
    \ifthenelse{\isempty{#1}}{%
        \quad \text{#2} \quad%
    }{%
        #1 \text{#2} #1%
    }%
}
\newcommand{\restrict}[2]{
    \left.
    \kern-\nulldelimiterspace{#1}
    \right|_{#2}
}

\newtheorem{counter}{}[section]

\theoremstyle{definition}
\newtheorem{defi}[counter]{Definition}
\newtheorem{exam}[counter]{Example}

\theoremstyle{plain}
\newtheorem{coro}[counter]{Corollary}
\newtheorem{lemm}[counter]{Lemma}
\newtheorem{prop}[counter]{Proposition}
\newtheorem{theo}[counter]{Theorem}

\begin{document}

%%%%%%%%%%%%%%%%%%%%%%%%%%%%%%

\date{}
\title[Cylinders and the zero locus of the plinth ideal]{Cylinders and the zero locus of the plinth ideal}

\author[Kirill Shakhmatov]{Kirill Shakhmatov}
\address{HSE University, Faculty of Computer Science, Pokrovsky Boulevard 11, Moscow, 109028 Russia}
\email{kshahmatov@hse.ru}

\thanks{The study was implemented in the framework of the Basic Research Program at HSE University (HSE-BR-2025-84).}

\subjclass[2020]{Primary 14R10, 14R20; \
Secondary 13A50}
\keywords{Affine variety, cylinder, locally nilpotent derivation, plinth ideal}

%%%%%%%%%%%%%%%%%%%%%%%%%%%%%%

\begin{abstract}
Given a $\mathbb{G}_\mathrm{a}$-action on an affine variety $X$, we show that the complement of the union of all principal invariant cylinders in $X$ is equal to the zero locus of the plinth ideal of the corresponding locally nilpotent derivation.
\end{abstract}

\maketitle

%%%%%%%%%%%%%%%%%%%%%%%%%%%%%%

\section{Introduction}

%%%%%%%%%%%%%%%%%%%%%%%%%%%%%%

We fix an algebraically closed field $\bb{K}$ of characteristic zero. By a variety we mean an integral separated scheme of finite type over $\bb{K}$. Denote by $\Ga = (\bb{K}, +)$ the one-dimensional unipotent algebraic group.

The study of $\Ga$-actions on algebraic varieties is a long-standing subject in algebraic geometry and commutative algebra. In addition to the fact that a description of $\Ga$-actions is an object of study in itself, the algebraic nature of $\Ga$-actions on affine varieties provides a technique that allows to tackle various problems. One example is the study of automorphism groups \cite{M-L01}, \cite{Ga21}. Another is the Zariski Cancellation Problem. Exponential maps -- a description of $\Ga$-actions in the case of positive characteristic -- were used by Gupta in \cite{Gu14} to provide counterexamples to the Zariski Cancellation problem in positive characteristic.

The results of the papers \cite{KPZ11} and \cite{KPZ13} provide a way to describe $\Ga$-actions using cylinders, that is, open subsets of the form $\bb{A}^1 \times Z$. In particular, this technique can be used to study
\linebreak
$\Ga$-actions on affine cones over a projective variety $X$ in terms of internal geometric properties of $X$. This method was used in \cite{CPW16} to solve a long-standing problem about absence of non-trivial $\Ga$-actions on affine Fermat cubic threefold.

Let $X$ be an affine variety. Then $\Ga$-actions on $X$ are in one-to-one correspondence with locally nilpotent derivations on the algebra $\bb{K}[X]$ of regular functions on $X$. Given a locally nilpotent derivation $\der$ on a $\bb{K}$-algebra $B$, the ideal $\pl(\der) = \Ker \der \cap \Im \der$ in $\Ker \der$ is called the plinth ideal of $\der$. Factorial affine varieties with $\Ga$-actions having principal plinth ideal were studied in~\cite{Ma23}. In this note we obtain a geometric description of the zero locus $\bb{V}_X(\pl(\der))$ of the plinth ideal.

Let $\alpha \colon \Ga \times X \to X$ be a $\Ga$-action on an affine variety $X$. By \cite[Proposition~3.1.5]{KPZ11} there exists a principal cylinder $U \cong \bb{A}^1 \times Z$ in $X$ such that every $\bb{A}^1$-fiber of $U$ is an orbit of $\alpha$. We are interested in the geometric locus of the union of such cylinders. One can view this union as the locus of points in $X$ where the quotient $X \to Y$, $Y = \Spec(\Ker \der)$ admits a local principal trivialization of the action $\alpha$.

This note consists of two sections. The main result of the first section is Theorem~\ref{CPI.th}, in which it is proved that the union of principal cylinders defined by an action $\alpha$ coincides with the complement of the zero locus $\bb{V}_X(\pl(\der))$. In the second section we provide counterexamples to several natural assumptions, showing that this description does not admit a direct simplification.

The author is grateful to Ivan Arzhantsev and Sergey Gaifullin for useful discussions and suggestions.

%%%%%%%%%%%%%%%%%%%%%%%%%%%%%%

\section{Cylinders and the plinth ideal}

%%%%%%%%%%%%%%%%%%%%%%%%%%%%%%

Let us introduce basic definitions and conventions.

\smallskip

Let $X$ be a variety and $S \subseteq \bb{K}[X]$ be a set of regular functions on $X$. Denote by $\bb{V}_X(S)$ the closed subvariety in $X$ given by equations $h = 0$ for $h \in S$. Denote $\bb{D}_X(S) = X \setminus \bb{V}_X(S)$.

Let $G$ be an algebraic group and $X$ be a variety. By an action of $G$ on $X$ we mean a regular action $G \times X \to X$. Consider an action $\alpha \colon G \times X \to X$. We denote by $\Fix(\alpha)$ the set of fixed points of the action $\alpha$. Given an invariant subvariety $Y \subseteq X$, we denote by $\restrict{\alpha}{Y}$ the induced action $G \times Y \to Y$.

Let $B$ be a $\bb{K}$-algebra. A \emph{locally nilpotent derivation} (abbreviated \emph{LND}) on $B$ is a
\linebreak
$\bb{K}$-derivation $\der \colon B \to B$ satisfying the following property: for each $b \in B$ there exists $n \in \bb{N}$ such that $\der^n(b) = 0$. The set of all locally nilpotent derivations on $B$ is denoted by $\LND(B)$.

\smallskip

Let us recall an algebraic description of $\Ga$-actions on affine varieties. Consider an affine variety $X$. If $\alpha \colon \Ga \times X \to X$ is a $\Ga$-action on $X$, then the composition
$$
\begin{tikzcd}
\bb{K}[X] \ar{r}{\alpha^*} & \bb{K}[X][s] \ar{r}{d / d s} &
\bb{K}[X][s] \ar{r}{s = 0} & \bb{K}[X]
\end{tikzcd}
$$
is a locally nilpotent derivation $\LND(\alpha) \in \LND(\bb{K}[X])$. Conversely, if $\der \colon \bb{K}[X] \to \bb{K}[X]$ is a locally nilpotent derivation, then the map
$$
\exp(s \der) =
\sum_{n = 0}^\infty \frac{s^n \der^n}{n!} \colon
\bb{K}[X] \to \bb{K}[X][s]
$$
defines an action $\A(\der) \colon \Ga \times X \to X$. This correspondence between $\Ga$-actions on $X$ and locally nilpotent derivations on $\bb{K}[X]$ is bijective.

The following lemma is a simple technical observation that we will need later.

% Gluing Derivations
\begin{lemm} \label{GD.le}
Consider a variety $X$ and a $\Ga$-action $\alpha$ on $X$. Let $U_1, U_2 \subseteq X$ be two
\linebreak
$\alpha$-invariant affine open subsets and let $f \in \bb{K}(X)$ be a rational function on $X$, regular on $U_1 \cup U_2$. For each $i = 1, 2$ denote $\der_i = \LND \left( \restrict{\alpha}{U_i} \right) \in \LND(\bb{K}[U_i])$. Then $\der_1(f) = \der_2(f)$ as rational functions on $X$.
\end{lemm}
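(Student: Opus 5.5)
The plan is to reduce to the intersection $U_1 \cap U_2$ and show that both derivations agree there. If $U_1 \cap U_2$ is empty there is nothing to check, but $X$ is irreducible and $U_1, U_2$ are nonempty open sets (we may assume both nonempty, otherwise the claim is vacuous), so $U_1 \cap U_2 \neq \emptyset$. Every $\der_i$ extends uniquely to a derivation of the function field $\bb{K}(X) = \operatorname{Frac}\bb{K}[U_i]$, and it is in this sense that $\der_1(f)$ and $\der_2(f)$ are compared as rational functions. It therefore suffices to show that $\der_1(f)$ and $\der_2(f)$ coincide as regular functions on the nonempty open set $U_1 \cap U_2$.

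The key step is a pointwise description of $\der_i(f)$. By definition $\der_i = (s=0)\circ (d/ds) \circ \alpha_i^*$, where $\alpha_i = \restrict{\alpha}{U_i}$. Since $f$ is regular on $U_i$, the pullback $\alpha_i^*(f) = f \circ \alpha \in \bb{K}[U_i][s]$ is the regular function $(s,x) \mapsto f(\alpha(s,x))$ on $\Ga \times U_i$. Hence for every point $x \in U_i$,
$$
\der_i(f)(x) = \frac{d}{ds}\Big|_{s=0} f(\alpha(s,x)).
$$
The right-hand side depends only on $f$, on $\alpha$ and on the point $x$, and not on $i$. Here it matters that $U_i$ is invariant, so that $\alpha(s,x) \in U_i$ for all $s$ and $f(\alpha(s,x))$ is a polynomial in $s$. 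Consequently, for $x \in U_1 \cap U_2$ both $\der_1(f)(x)$ and $\der_2(f)(x)$ equal this same derivative.

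To make this rigorous without pointwise arguments, I would note that $U_1 \cap U_2$ is an invariant open subset, affine since $X$ is separated. Restricting the polynomial $\alpha_i^*(f)$ to $\Ga \times (U_1 \cap U_2)$ gives in both cases $\alpha^*(f)$ computed on $\Ga \times (U_1\cap U_2)$. Differentiation in $s$ and setting $s=0$ commute with restriction to an open subset, so both derivatives equal $\LND(\restrict{\alpha}{U_1\cap U_2})(f)$. Two regular functions agreeing on a nonempty open subset of an integral variety are equal as rational functions, which finishes the argument.

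The only mildly delicate point is checking that the composition defining $\LND$ commutes with the restriction maps $\bb{K}[U_i] \to \bb{K}[U_1 \cap U_2]$. This is immediate because $\alpha^*$ is compatible with restriction to invariant open sets, and the remaining operations act only on the variable $s$.
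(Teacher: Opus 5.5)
Your proposal is correct and follows essentially the same route as the paper: pass to the invariant affine open $U_1 \cap U_2$ (affine by separatedness), use that the composition defining $\LND$ commutes with restriction to an invariant affine open subset, and conclude that $\der_1(f)$ and $\der_2(f)$ agree on a nonempty open set, hence in $\bb{K}(X)$. The paper only differs in packaging this restriction compatibility as a separate first case $U_2 \subseteq U_1$; your pointwise derivative formula is an extra heuristic, and your use of irreducibility to get $U_1 \cap U_2 \neq \emptyset$ makes explicit a step the paper leaves implicit.
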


\begin{proof}
First we deal with the case $U_2 \subseteq U_1$. Consider a diagram
$$
\begin{tikzcd}
\Ga \times U_2 \ar{r} \ar{d} & U_2 \ar{d}
\\
\Ga \times U_1 \ar{r} & U_1
\end{tikzcd}
,
$$
where the horizontal arrows are restrictions of the action $\alpha$ and the vertical arrows are natural embeddings. Clearly, this diagram is commutative, hence the dual diagram
$$
\begin{tikzcd}
\bb{K}[U_1] \ar{r} \ar{d} & \bb{K}[U_1][s] \ar{d}
\\
\bb{K}[U_2] \ar{r} & \bb{K}[U_2][s]
\end{tikzcd}
$$
is also commutative. Therefore, we have $\der_2(f) = \restrict{\der_1(f)}{U_2}$ by construction.

Now consider the general case $U_1, U_2 \subseteq X$ and denote $U_3 = U_1 \cap U_2$. Then $U_3 \subseteq X$ is an
\linebreak
$\alpha$-invariant affine open subset, so there exists a locally nilpotent derivation $\der_3 = \LND \left( \restrict{\alpha}{U_3} \right) \in$
\linebreak
$\in \LND(\bb{K}[U_3])$. By the first case we have
$$
\der_3(f) = \restrict{\der_1(f)}{U_3}
\insertText{and}
\der_3(f) = \restrict{\der_2(f)}{U_3},
$$
hence $\der_1(f) = \der_2(f)$ in $\bb{K}(X)$.
\end{proof}

\begin{defi}
Consider a variety $X$. A \emph{cylinder in $X$} is an open subset $U$ together with an isomorphism $U \cong \bb{A}^1 \times Z$ for some variety $Z$. By a \emph{fiber} of a cylinder $U$ we mean a fiber of the projection $U \to Z$ to the second factor. We call the projection $U \to \bb{A}^1$ to the first factor the \emph{level function} of the cylinder $U$.
\end{defi}

\begin{defi}
Let $\alpha \colon \Ga \times X \to X$ be a $\Ga$-action on a variety $X$. We say that $\alpha$ is \emph{cylindrical} if there is an isomorphism $X \cong \bb{A}^1 \times Z$ for some variety $Z$ such that $\alpha$ acts by translations along the first factor.
\end{defi}

Let $B$ be a $\bb{K}$-algebra and $\der \in \LND(B)$. An element $b \in B$ is called a \emph{slice} of $\der$ if $\der(b) = 1$. Assume that $X$ is an affine variety, $B = \bb{K}[X]$ and $\alpha = A(\der)$ is the corresponding $\Ga$-action on $X$. Then by Slice Theorem \cite[Corollary~1.26]{Fr17} the action $\alpha$ is cylindrical if and only if the derivation $\der$ admits a slice. Moreover, a function $f \in \bb{K}[X]$ is a slice of $\der$ if and only if $f$ is a level function of a cylinder $X \cong \bb{A}^1 \times Z$ and $\alpha$ is cylindrical with respect to this isomorphism.

The following result is a generalization of \cite[Lemma~2.3]{ST25} for arbitrary varieties.

% Cylinders of \Ga-Actions
\begin{prop} \label{CGA.pr}
Let $X$ be a variety and $\alpha$ be a $\Ga$-action on $X$. Assume that there is an isomorphism $X \cong \bb{A}^1 \times Z$ such that every $\bb{A}^1$-fiber is an orbit of $\alpha$. Then $\alpha$ is cylindrical.
\end{prop}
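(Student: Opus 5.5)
We reduce to the affine Slice Theorem by working locally on $Z$. Since $X$ need not be affine, cover $Z$ by affine open subsets $Z_j$. Each $U_j = \bb{A}^1 \times Z_j \subseteq X$ is a union of $\bb{A}^1$-fibers. Because every fiber is an orbit of $\alpha$, each $U_j$ is $\alpha$-invariant. It is also affine. Put $\der_j = \LND(\restrict{\alpha}{U_j}) \in \LND(\bb{K}[U_j])$, and let $t \in \bb{K}[X]$ be the level function, i.e.\ the coordinate on $\bb{A}^1$.

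The key claim is that $\der_j(t)$ is a nonzero constant, and that this constant is the same for all $j$. First, $\bb{K}[Z_j] \subseteq \Ker \der_j$. Indeed, a function pulled back from $Z_j$ is constant on each $\bb{A}^1$-fiber, hence on each orbit, hence invariant. So $\der_j$ is $\bb{K}[Z_j]$-linear on $\bb{K}[U_j] = \bb{K}[Z_j][t]$, and it is determined by $g_j = \der_j(t) \in \bb{K}[Z_j][t]$. Local nilpotency forces $\deg_t g_j \le 0$: if $\deg_t g_j = m \ge 1$, then $\deg_t \der_j^n(t)$ grows without bound, as in the standard argument over the integral domain $\bb{K}[Z_j]$. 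Hence $g_j \in \bb{K}[Z_j]$.

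Next we show that $g_j$ vanishes nowhere on $Z_j$. At $z \in Z_j$ the action on the fiber $\bb{A}^1 \times \{z\}$ is $t \mapsto t + s\,g_j(z)$. If $g_j(z) = 0$, every point of that fiber is fixed, so the fiber is not a single orbit, contradicting the hypothesis. Therefore $g_j \in \bb{K}[Z_j]^\times$. By Lemma~\ref{GD.le}, $\der_j(t) = \der_k(t)$ as rational functions on $X$, since $t$ is regular on $X$. So the $g_j$ glue to a global invertible function $g \in \mathcal{O}(Z)^\times$.

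Now set $t' = t/g$. This is regular on $X$, and $\der_j(t') = 1$ on every $U_j$, because $g$ lies in the kernel. The map $(t', z)\colon X \to \bb{A}^1 \times Z$ is an isomorphism, obtained from the original one by an automorphism $(t,z) \mapsto (t/g(z), z)$ of $\bb{A}^1 \times Z$ over $Z$. On each $U_j$ the action becomes $\exp(s\der_j)(t') = t' + s$, i.e.\ translation along the first factor. These local descriptions agree, so $\alpha$ is cylindrical.

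The main obstacle is making the orbit condition precise: deducing that $\bb{K}[Z_j]$ lies in the kernel, and that a zero of $g_j$ contradicts the fiber being an orbit. An orbit is a single point or a copy of $\bb{A}^1$, so the fiber through $z$ is an orbit only if $g_j(z) \neq 0$. For the degree bound, a slicker alternative is to note that the action on each fiber is an algebraic $\Ga$-action on $\bb{A}^1$ with no fixed point. Such an action is $t \mapsto t + cs$ with $c \ne 0$.
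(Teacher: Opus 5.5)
Your proof is correct and is essentially the paper's proof. Both cover $Z$ by affine charts, glue the functions $\der_j(t)$ via Lemma~\ref{GD.le} into an invertible function $g$ on $Z$, and rescale the level function by $g$ (the paper conjugates by $\phi(u,z) = (u/h, z)$); the only difference is that you verify the per-chart facts $\bb{K}[Z_j] \subseteq \Ker \der_j$ and $g_j \in \bb{K}[Z_j]^\times$ directly where the paper cites \cite[Lemma~2.3]{ST25}. One small slip: if $g_j = a + b t$ with $b \neq 0$, then $\der_j^n(t) = b^{n-1} g_j$ has degree $1$ for every $n$ rather than growing without bound, but it is never zero because $\bb{K}[Z_j]$ is a domain, so your conclusion stands.
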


\begin{proof}
Consider a covering $Z = \bigcup_{i \in I} U_i$ of $Z$ by open affine charts. For each $i \in I$ denote $V_i = \bb{A}^1 \times U_i$. Then $X = \bigcup_{i \in I} V_i$ is a covering of $X$ by affine $\alpha$-invariant cylinders. For each $i \in I$ denote
$$
\der_i =
\LND \left( \restrict{\alpha}{V_i} \right) \in
\LND(\bb{K}[V_i]).
$$
Denote by $s$ the level function of the cylinder $X \cong \bb{A}^1 \times Z$. By Lemma~\ref{GD.le} the functions $h_i = \der_i(s) \in \bb{K}[V_i] \subseteq \bb{K}(X)$ can be glued together, defining a function $h \in \bb{K}[X]$. By the argument from \cite[Lemma~2.3]{ST25}, applied to each chart $V_i$, we have $h \in \bb{K}[Z] \subseteq \bb{K}[X]$, the function $h$ is invertible and
$$
\alpha \big( s, (u, z) \big) = (u + s h, z)
$$
for all $s \in \Ga$ and $(u, z) \in \bb{A}^1 \times Z$. Conjugating the action $\alpha$ by the isomorphism
$$
\phi \colon \bb{A}^1 \times Z \to \bb{A}^1 \times Z, \quad
\phi(u, z) = \left( \frac{u}{h}, z \right),
$$
we obtain an action
$$
s \cdot (u, z) =
\phi \Big( \alpha \big( s, \ \phi^{-1}(u, z) \big) \Big) =
\phi \Big( \alpha \big( s, \ (u h, z) \big) \Big) =
\phi(u h + s h, z) = (u + s, z).
$$
Therefore, $\alpha$ is cylindrical.
\end{proof}

Let $X$ be an affine variety. By \cite[Proposition~3.1.5]{KPZ11} every $\Ga$-action on $X$ gives rise to a principal cylinder in $X$. Namely, if $\alpha \colon \Ga \times X \to X$ is a $\Ga$-action on $X$, then there exists an $\alpha$-invariant principal cylinder $U \subseteq X$ such that $\restrict{\alpha}{U}$ is cylindrical.

\begin{defi}
Let $\alpha \colon \Ga \times X \to X$ be a $\Ga$-action on a variety $X$. By an \emph{$\alpha$-cylinder} we mean an $\alpha$-invariant cylinder $U \subseteq X$ such that $\restrict{\alpha}{U}$ is cylindrical.
\end{defi}

Let $B$ be a $\bb{K}$-algebra and $\der \in \LND(B)$. The \emph{plinth ideal} of the derivation $\der$ is the set
$$
\pl(\der) = \Ker(\der) \cap \Im(\der).
$$
This is an ideal in the algebra $\Ker(\der)$.

The following result is a description of the set of functions $h \in \bb{K}[X]$ that define a principal $\alpha$-cylinder $\bb{D}_X(h)$.

% Cylinders and Plinth Ideal
\begin{theo} \label{CPI.th}
Let $X$ be an affine variety, $\alpha$ be a $\Ga$-action on $X$, and $h \in \bb{K}[X]$. Denote $\der = \LND(\alpha) \in \LND(\bb{K}[X])$. Then $\bb{D}_X(h)$ is an $\alpha$-cylinder if and only if $h^n \in \pl(\der)$ for some $n \in \bb{N}$.
\end{theo}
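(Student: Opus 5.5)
We prove the two implications separately. The whole argument rests on one observation: for $h \in \Ker \der$, the set $\bb{D}_X(h)$ is $\alpha$-invariant and affine, with $\bb{K}[\bb{D}_X(h)] = \bb{K}[X]_h$. Moreover, $\der$ extends uniquely to a locally nilpotent derivation $\der_h$ of $\bb{K}[X]_h$, namely $\der_h(b/h^k) = \der(b)/h^k$, and by the first case of the proof of Lemma~\ref{GD.le} we have $\der_h = \LND\bigl(\restrict{\alpha}{\bb{D}_X(h)}\bigr)$. By the Slice Theorem, $\bb{D}_X(h)$ is then an $\alpha$-cylinder if and only if $\der_h$ admits a slice.

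($\Leftarrow$) Assume $h^n \in \pl(\der)$. Then $h^n \in \Ker \der$, and since $\Ker \der$ is factorially closed in the domain $\bb{K}[X]$ (from $0 = \der(h^n) = n h^{n-1}\der(h)$ we get $\der(h) = 0$), also $h \in \Ker \der$. So $U = \bb{D}_X(h) = \bb{D}_X(h^n)$ is invariant and affine. Since $h^n \in \Im \der$, write $h^n = \der(f)$. Then $\der_h(f/h^n) = 1$, so $f/h^n$ is a slice of $\der_h$. The Slice Theorem shows that $\restrict{\alpha}{U}$ is cylindrical, with $U \cong \bb{A}^1 \times Z$, $Z = \Spec(\Ker \der_h)$. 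Hence $U$ is an $\alpha$-cylinder.

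($\Rightarrow$) Assume $U = \bb{D}_X(h)$ is an $\alpha$-cylinder. First we show $h \in \Ker \der$. The set $U$ is invariant, so its complement $\bb{V}_X(h)$ is invariant. Since $\bb{K}[X]$ is a domain, the zero locus of $h$ is therefore also the zero locus of $\der(h)$ along orbits, and we argue as follows. The function $t \mapsto \alpha^*(h)(t,x)$ is a polynomial in $t$ for each $x \in U$ and has no zeros on $\Ga$, so it is a nonzero constant. Thus $\alpha^*(h) = h$ on $\Ga \times U$, which is dense, so $\der(h) = 0$. Next, $U$ is affine with $\bb{K}[U] = \bb{K}[X]_h$, and $\restrict{\alpha}{U}$ is cylindrical, so $\der_h$ admits a slice $g = f/h^k$ with $f \in \bb{K}[X]$. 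Then $\der(f)/h^k = 1$, i.e. $\der(f) = h^k$. This gives $h^k \in \Im \der \cap \Ker \der = \pl(\der)$. If $k = 0$, then $1 \in \pl(\der)$ and we may take $n = 1$ after multiplying, since $\pl(\der)$ is an ideal in $\Ker \der$ and $h \in \Ker\der$.

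The main obstacle is the step $h \in \Ker \der$ in the forward direction, together with the identification $\LND\bigl(\restrict{\alpha}{U}\bigr) = \der_h$. For the former, the constancy argument along orbits is the key point; it uses that a nowhere-vanishing polynomial on $\bb{A}^1$ is constant. For the latter, we rely on the naturality diagram in Lemma~\ref{GD.le}.
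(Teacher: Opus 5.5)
Your proof is correct and is essentially the paper's argument: in ($\Leftarrow$) you write out the slice $f/h^n$ that the paper obtains by citing the construction of Kishimoto--Prokhorov--Zaidenberg, and in ($\Rightarrow$) you write a slice of the extended derivation as $f/h^k$ and read off $\der(f)=h^k\in\pl(\der)$, exactly as the paper does. The only difference is cosmetic: you prove $\der(h)=0$ by noting that $t\mapsto h(t\cdot x)$ is a zero-free polynomial, hence constant, for $x\in U$, while the paper observes that $h$ is a unit of $\bb{K}[U]$ and units lie in the kernel of a locally nilpotent derivation.
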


\begin{proof}
Assume that there exists $n \in \bb{N}$ such that $h^n \in \pl(\der)$. Since $\bb{D}_X(h) = \bb{D}_X(h^n)$, we may assume that $h \in \pl(\der)$. The assertion now follows from the construction in \cite[Proposition~3.1.5]{KPZ11}.

Conversely, denote $U = \bb{D}_X(h)$ and assume that $U \cong \bb{A}^1 \times Z$ is an $\alpha$-cylinder. The LND $\der$ extends to an LND on $\bb{K}[U] \supseteq \bb{K}[X]$. Then, since $h$ is an invertible element of the algebra $\bb{K}[U]$, we have $\der(h) = 0$. Moreover,
$$
\bb{K}[U] = \bb{K}[X]_h \cong \bb{K}[Z][s],
$$
where $\bb{K}[Z] = \Ker \der$ and $\der(s) = 1$. There exist $f \in \bb{K}[X]$ and $n \in \bb{N}$ such that
$
s = \frac{f}{h^n}.
$
Therefore,
$$
h^n s \in \bb{K}[X]
\insertText{and}
\der(h^n s) = h^n \in \pl(\der)
$$
as required.
\end{proof}

% Union of Cylinders
\begin{coro} \label{UC.co}
Let $X$, $\alpha$ and $\der$ be as in Theorem~\ref{CPI.th}. Let $\{U_i\}_{i \in I}$ be the set of all principal $\alpha$-cylinders in $X$. Then
$$
X \setminus \bigcup_{i \in I} U_i =
\bb{V}_X \big( \pl(\der) \big).
$$
\end{coro}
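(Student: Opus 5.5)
We prove the two inclusions using Theorem~\ref{CPI.th}. By definition, the principal $\alpha$-cylinders are exactly the sets $\bb{D}_X(h)$, $h \in \bb{K}[X]$, that are $\alpha$-cylinders. By Theorem~\ref{CPI.th} these are exactly the sets $\bb{D}_X(h)$ with $h^n \in \pl(\der)$ for some $n \in \bb{N}$. Hence
$$
\bigcup_{i \in I} U_i = \bigcup_{h \,:\, \exists n,\ h^n \in \pl(\der)} \bb{D}_X(h).
$$

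First, if $h^n \in \pl(\der)$, then $\bb{D}_X(h) = \bb{D}_X(h^n) \subseteq \bb{D}_X(\pl(\der))$. So every $U_i$ lies in $\bb{D}_X(\pl(\der))$, which gives
$$
\bb{V}_X(\pl(\der)) \subseteq X \setminus \bigcup_{i \in I} U_i.
$$

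Conversely, let $x \in \bb{D}_X(\pl(\der))$. Then some $g \in \pl(\der)$ satisfies $g(x) \neq 0$. By Theorem~\ref{CPI.th} with $n = 1$, the set $\bb{D}_X(g)$ is a principal $\alpha$-cylinder, and it contains $x$. Therefore
$$
\bb{D}_X(\pl(\der)) \subseteq \bigcup_{i \in I} U_i.
$$
Taking complements gives the stated equality.

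One degenerate case needs checking: $\pl(\der) = 0$. This happens for the trivial action ($\der = 0$), and then $\bb{V}_X(\pl(\der)) = X$. On the other side, no nonempty principal $\alpha$-cylinder exists, because Theorem~\ref{CPI.th} would force $h^n \in \pl(\der) = 0$, so $h = 0$ and $\bb{D}_X(h) = \emptyset$. The empty set may formally count as a cylinder, but it does not change the union. So both sides equal $X$ and the formula still holds.

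The main point needing care is the identification of "principal $\alpha$-cylinder" with "$\bb{D}_X(h)$ for some regular $h$", which is the definition of principal; beyond that the statement is a direct consequence of Theorem~\ref{CPI.th}.
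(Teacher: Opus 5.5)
Your proof is correct and is exactly the deduction the paper intends. The paper states the corollary without a separate proof, as an immediate consequence of Theorem~\ref{CPI.th}, and your two inclusions are that deduction: $\bb{D}_X(h) = \bb{D}_X(h^n)$ for one direction, and the case $n = 1$ applied to some $g \in \pl(\der)$ with $g(x) \neq 0$ for the other. Your separate treatment of $\pl(\der) = 0$ is harmless but unnecessary, since the general argument already covers that case.
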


It is worth to mention here the equality $\bb{V}_X(\Im \der) = \Fix(\alpha)$.

% Principal Plinth Ideal
\begin{coro} \label{PPI.co}
Let $X$, $\alpha$ and $\der$ be as in Theorem~\ref{CPI.th}. Assume that $\pl(\der) \subseteq \Ker(\der)$ is a principal ideal. Then there exists a maximal principal $\alpha$-cylinder in $X$.
\end{coro}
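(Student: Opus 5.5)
The plan is to take a generator $h$ of the principal ideal $\pl(\der)$ and show that $\bb{D}_X(h)$ is the maximal principal $\alpha$-cylinder, i.e.\ it contains every principal $\alpha$-cylinder in $X$. Both inclusions will come from Theorem~\ref{CPI.th}, applied once in each direction.

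First, write $\pl(\der) = h \cdot \Ker(\der)$ with $h \in \Ker(\der)$. Since $h \in \pl(\der)$, Theorem~\ref{CPI.th} with $n = 1$ shows that $\bb{D}_X(h)$ is an $\alpha$-cylinder. It is principal by construction.

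Second, let $\bb{D}_X(g)$, with $g \in \bb{K}[X]$, be an arbitrary principal $\alpha$-cylinder. The converse direction of Theorem~\ref{CPI.th} gives some $n \in \bb{N}$ with $g^n \in \pl(\der)$. Hence $g^n = h q$ for some $q \in \Ker(\der)$. At any point where $h$ vanishes, $g^n$ vanishes, so $g$ vanishes too. Thus $\bb{V}_X(h) \subseteq \bb{V}_X(g)$, that is, $\bb{D}_X(g) \subseteq \bb{D}_X(h)$. So $\bb{D}_X(h)$ contains every principal $\alpha$-cylinder, and it is therefore the (unique) maximal one. Equivalently, by Corollary~\ref{UC.co}, $\bb{D}_X(h)$ is exactly the union of all principal $\alpha$-cylinders.

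No step is really hard. The only point needing care is the degenerate case $\pl(\der) = 0$, where the generator is $h = 0$. If $\der \neq 0$, then \cite[Proposition~3.1.5]{KPZ11} gives a nonempty principal $\alpha$-cylinder, so $\pl(\der) \neq 0$ by Theorem~\ref{CPI.th}, and this case cannot occur. If $\der = 0$, then Theorem~\ref{CPI.th} shows that the only principal $\alpha$-cylinder is the empty set $\bb{D}_X(0)$. That set is then trivially maximal, or the case is excluded by convention. I would state this case explicitly.
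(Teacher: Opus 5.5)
Your proof is correct and is essentially the argument the paper intends; the paper states this corollary without proof. A generator $h$ of $\pl(\der)$ gives a principal $\alpha$-cylinder $\bb{D}_X(h)$ by Theorem~\ref{CPI.th}, and $\bb{D}_X(h) = \bb{D}_X\big(\pl(\der)\big)$ contains every principal $\alpha$-cylinder by Corollary~\ref{UC.co}, which your direct argument with $g^n = hq$ reproves. Your explicit handling of the degenerate case $\pl(\der) = 0$ is a careful addition the paper does not address.
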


Note that by a result of Bonnet, Daigle, and Kaliman \cite[Theorem~1.5]{DK09} if $X = \bb{A}^3$, then $\pl(\der)$ is a principal ideal in $\Ker(\der)$.

%%%%%%%%%%%%%%%%%%%%%%%%%%%%%%

\section{Examples}

%%%%%%%%%%%%%%%%%%%%%%%%%%%%%%

The first example is a $\Ga$-action $\alpha$ on $\bb{A}^3$ such that $\bb{A}^3 \setminus \Fix(\alpha)$ is not an $\alpha$-cylinder. Note that there is no such action on $\bb{A}^2$. Indeed, by a result of Rentschler \cite{Re68} every $\Ga$-action on $\bb{A}^2$ is conjugate to an action of the form $s \cdot (x, y) = (x + s p, y)$ for some $p \in \bb{K}[y]$. Then $\Fix(\alpha) = \bb{V}_{\bb{A}^2}(p)$ and $\bb{A}^2 \setminus \Fix(\alpha)$ is a cylinder $\bb{A}^1 \times \bb{D}_{\bb{A}^1}(f)$.

% Fixed Points
\begin{exam} \label{FP.ex}
Consider an LND
$$
\der = y \frac{\der}{\der x} + z \frac{\der}{\der y} \in
\LND(\bb{K}[x, y, z]).
$$
The corresponding $\Ga$-action $\alpha$ on $\bb{A}^3$ is
$$
s \cdot (x, y, z) =
\left( x + s y + \frac{1}{2} s^2 z, \ y + s z, \ z \right).
$$
Denote $\bb{K}^\times = \bb{K} \setminus \{0\}$. The orbits of $\alpha$ are parabolas
$$
z = \lambda, \quad
x = \frac{1}{2 \lambda} y^2 + a
$$
for some $\lambda \in \bb{K}^\times$ and $a \in \bb{K}$; lines
$$
z = 0, \quad y = \lambda
$$
for some $\lambda \in \bb{K}^\times$; and fixed points
$$
z = 0, \quad y = 0, \quad x = a
$$
for some $a \in \bb{K}$. One can check that
$$
\Ker(\der) = \bb{K}[z, \ y^2 - 2 x z]
\insertText{and}
\pl(\der) = \Ker(\der) \cdot z.
$$
Although $U = \bb{A}^3 \setminus \Fix(\alpha)$ is a cylinder $U \cong \bb{A}^1 \times \big( \bb{A}^2 \setminus \{(0, 0)\} \big)$, we claim that $U$ is not an $\alpha$-cylinder. Assume the contrary that there is an isomorphism $\phi \colon U \to \bb{A}^1 \times Z$ such that $\restrict{\alpha}{U}$ is cylindrical with respect to $\phi$. Denote by $f$ the level function of $\phi$. Since $\Fix(\alpha)$ has codimension greater than one in $\bb{A}^3$, we have $f \in \bb{K}[\bb{A}^3]$. Consider an $\alpha$-invariant subset $V = \bb{D}_{\bb{A}^3}(z)$. The isomorphism $\phi$ restricts to an isomorphism $\restrict{\phi}{V} \colon V \to \bb{A}^1 \times Z'$ with a level function $\restrict{f}{V}$. Therefore, $\restrict{f}{V}$ is a slice of $\der$ on $\bb{K}[V] = \bb{K}[x, y, z, z^{-1}]$. It is easy to see that every such slice has the form
$$
\frac{y}{z} + h(z, \ z^{-1}, \ y^2 - 2 x z)
$$
for some polynomial $h$, but these functions are not regular on $\bb{A}^3$, which is a contradiction.
\end{exam}

The second example is a free $\Ga$-action $\alpha$ on an affine surface $X$ such that $X$ cannot be covered by $\alpha$-cylinders. Again, there is no such action on $\bb{A}^2$. We will need the following description of cylinders in $\bb{P}^2$.

% Cylinders in P^2
\begin{lemm} \label{CP2.le}
Let $U \subseteq \bb{P}^2$ be a cylinder $U \cong \bb{A}^1 \times C$. Then $U$ is affine and $C$ is isomorphic to an open subset of $\bb{A}^1$.
\end{lemm}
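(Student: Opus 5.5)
The first step is to show that $C$ is a smooth rational curve. Since $U$ is a nonempty open subset of $\bb{P}^2$, it is a smooth rational surface. The variety $C$ is then smooth, because $U \cong \bb{A}^1 \times C$ is smooth. It is one-dimensional and irreducible. Choose a point $a \in \bb{A}^1$. The slice $\{a\} \times C \subseteq U$ is the image of the section $C \to U$, $c \mapsto (a, c)$, so it is a closed curve in $U$ isomorphic to $C$. Moreover, $C$ is unirational because $\bb{K}(C) \subseteq \bb{K}(U) = \bb{K}(x, y)$. By L\"uroth's theorem in its curve form, $C$ is therefore rational. Thus $C$ is isomorphic to an open subset of a smooth projective rational curve, i.e., to an open subset of $\bb{P}^1$. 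It remains to exclude $C \cong \bb{P}^1$ and to prove that $U$ is affine.

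Next I would show that the boundary $D = \bb{P}^2 \setminus U$ is a nonempty curve, and that $C \neq \bb{P}^1$ follows from this. Consider the regular invertible functions on $U$. We have $\mathcal{O}(U)^\times = \mathcal{O}(\bb{A}^1 \times C)^\times = \mathcal{O}(C)^\times$. If $C \cong \bb{P}^1$, then $U \cong \bb{A}^1 \times \bb{P}^1$ contains the complete curve $\{a\} \times \bb{P}^1$ with self-intersection $0$ in $U$. I would derive a contradiction by computing the class group. Write $D_1, \dots, D_k$ for the one-dimensional components of $D$. Then $\Cl(U) = \bb{Z}/\gcd(\deg D_i)$, which is finite if $k \geq 1$ and equals $\bb{Z}$ if $D$ is finite. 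On the other hand $\Cl(\bb{A}^1 \times C) = \Cl(C)$, which is $\bb{Z}$ for $C = \bb{P}^1$ and $0$ for $C$ open in $\bb{A}^1$. Suppose $C = \bb{P}^1$. Then $D$ has no divisorial part, so $U$ is $\bb{P}^2$ minus finitely many points. Hence every regular function on $U$ extends to $\bb{P}^2$ and is constant. But the projection $U \to \bb{A}^1$ is a nonconstant regular function, a contradiction. So $C$ is a proper open subset of $\bb{P}^1$, hence isomorphic to an open subset of $\bb{A}^1$, and $C$ is affine. The same argument applies to every $C$ and shows that $D$ contains a curve; otherwise the level function would be constant.

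Finally, affineness. The complement $D$ contains a curve, so $D_0 = D_1 \cup \dots \cup D_k$ is a nonempty divisor and $\bb{P}^2 \setminus D_0$ is affine, being the complement of an ample hypersurface. Then $U = (\bb{P}^2 \setminus D_0) \setminus F$, where $F$ is a finite set of points. The main obstacle is to show that $F = \emptyset$. Here I would use the fact that $U \cong \bb{A}^1 \times C$ is itself affine, since $C$ is affine. An affine open subset has complement of pure codimension one: removing finitely many points from a normal affine surface never gives an affine variety, because regular functions extend across codimension-two loci. If $F \neq \emptyset$, then $\mathcal{O}(U) = \mathcal{O}(\bb{P}^2 \setminus D_0)$, and $U$ would have to equal $\operatorname{Spec}$ of this ring, which is $\bb{P}^2 \setminus D_0 \neq U$, a contradiction. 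Actually affineness of $U$ already follows directly from $U \cong \bb{A}^1 \times C$ with $C$ affine, so this last step is immediate once $C$ is shown to be affine.
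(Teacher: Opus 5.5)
Your proof is correct, and affineness is handled as in the paper (it is immediate once $C$ is affine). The two substantive steps, however, take a different route.

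\emph{Rationality.} You note that $U \to C$ makes $C$ unirational and apply L\"uroth. The paper instead uses that $\Cl(C) \cong \Cl(U)$ is a quotient of $\Cl(\mathbb{P}^2) \cong \mathbb{Z}$. Hence $\Cl(\hat{C})$ is finitely generated and $\hat{C}$ has genus zero.

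\emph{Excluding $C \cong \mathbb{P}^1$.} You argue that $\Cl(U) \cong \mathbb{Z}$ would force $\mathbb{P}^2 \setminus U$ to be finite, so by Hartogs the level function would be constant. The paper instead notes that $\{0\} \times \mathbb{P}^1$ and $\{1\} \times \mathbb{P}^1$ would be disjoint complete, hence closed, curves in $\mathbb{P}^2$. This contradicts the fact that any two plane curves meet.

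\emph{Comparison.} Your L\"uroth step works for cylinders in any unirational variety, regardless of the class group. The paper's intersection argument is shorter. Your own ingredients also give a one-stroke proof. The nonconstant level function forces a curve in the boundary (Hartogs), so $\Cl(C) \cong \Cl(U) \cong \mathbb{Z}/\gcd(\deg D_i)\mathbb{Z}$ is finite. Finiteness rules out both positive genus and $C \cong \mathbb{P}^1$.

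\emph{Redundancies.} The discussion of $F$ in your last paragraph is superfluous. Read as a proof of affineness it would be circular, since it presupposes that $U$ is affine. Your closing sentence is the actual argument. The remarks on units and self-intersection are unused and can be dropped.
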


\begin{proof}
Clearly, $C$ is a smooth curve. We claim that $C$ is rational. Indeed, the open embedding $U \subseteq \bb{P}^2$ induces a surjection of divisor class groups $\Cl(\bb{P}^2) \to \Cl(U)$. Since $\Cl(\bb{P}^2) \cong \bb{Z}$ and $\Cl(U) \cong \Cl(C)$, we see that $\Cl(C)$ is a quotient group of $\bb{Z}$. In particular, $\Cl(\hat{C})$ is a finitely generated group, where $\hat{C}$ is a smooth projective model of $C$. It follows that the genus of $\hat{C}$ is zero, hence $C$ is rational.

A smooth rational curve is isomorphic to an open subset of $\bb{P}^1$, so it suffices to show that $C \ne \bb{P}^1$. But if $C = \bb{P}^1$, then the curves $\{0\} \times \bb{P}^1$ and $\{1\} \times \bb{P}^1$ in $U$ are disjoint and closed in $\bb{P}^2$, which is a contradiction.
\end{proof}

\begin{exam}
Consider a $\Ga$-action $\alpha$ on $\bb{P}^2$ defined by the formula
$$
s \cdot (x : y : z) = \left(
    x + s y + \frac{1}{2} s^2 z \ : \ y + s z \ : \ z
\right).
$$
One can obtain this action from the action in Example~\ref{FP.ex} by passing to the quotient of $\bb{A}^3$ by the standard action of scalar matrices. The only fixed point of $\alpha$ is $(1 : 0 : 0)$ and the closures of other orbits of $\alpha$ are quadrics
$$
x z = \frac{y^2}{2} + a z^2
$$
for $a \in \bb{K}$ and the line $z = 0$. The affine variety
$$
X = \bb{P}^2 \setminus \{y^2 = 2 x z\}
$$
is $\alpha$-invariant and the induced action $\restrict{\alpha}{X}$ has no fixed points.

Denote $h = y^2 - 2 x z$. The algebra of regular functions on $X$ is the subalgebra in $\bb{K}(x, y, z)$ of functions of the form $f / h^n$, where $n$ is a non-negative integer and $f \in \bb{K}[x, y, z]$ is a homogeneous polynomial of degree $2 n$. One check that
$$
\Ker(\der) = \bb{K}[z^2 / h]
\insertText{and}
\pl(\der) = \Ker(\der) \cdot \frac{z^2}{h}.
$$
Let us show that every cylinder $U \subseteq X$ is a principal proper subset. First, $X$ is not cylinder, since by Lemma~\ref{CP2.le} we have $\Cl(U) = 0$ but $\Cl(X) \cong \bb{Z} / 2 \bb{Z}$. Therefore, $U \subsetneq X$ is a proper subset. The variety $X$ is smooth and the cylinder $U$ is affine by Lemma~\ref{CP2.le}, so the complement $X \setminus U$ is of pure codimension one. Since $\Cl(X) \cong \bb{Z} / 2 \bb{Z}$ we conclude that $U$ is principal. Thus, from Corollary~\ref{UC.co} we deduce that the union of all $\alpha$-cylinders in $X$ is the proper subset $z \ne 0$.
\end{exam}

In the third example we show that for affine $X$ there may be an affine $\alpha$-cylinder $U \subseteq X$ which cannot be covered by principal $\alpha$-cylinders.

\begin{exam}
Consider a $\Ga$-action on $\bb{A}^3$ defined by an LND $y \frac{\der}{\der x} + z \frac{\der}{\der y}$ as in Example~\ref{FP.ex}. The Danielewski surface
$$
X = \{ y^2 - 2 x z - 1 = 0 \}
$$
is invariant with respect to this action. Denote by $\alpha$ the restriction of this action to $X$ and let $\der = \LND(\alpha) \in \LND(\bb{K}[X])$. Let $U \subseteq X$ be the complement to the orbit $z = y - 1 = 0$. The extension of $\der$ to $\bb{K}[U]$ admits a slice $\frac{y + 1}{z} = \frac{2 x}{y - 1}$, hence $U$ is an $\alpha$-cylinder
$$
\begin{array}{rcl}
U & \cong & \bb{A}^1 \times \bb{A}^1
\\
(x, y, z) & \mapsto & \left( z, \frac{y + 1}{z} \right)
\\
\left( \frac{ (u v - 2) v }{2}, \ u v - 1 , \ u \right) &
\mapsfrom & (u, v)
\end{array}
.
$$
On the other hand, we see that
$$
\Ker(\der) = \bb{K}[z]
\insertText{and}
\pl(\der) = \Ker(\der) \cdot z.
$$
By Corollary~\ref{UC.co} the union of all principal $\alpha$-cylinders in $X$ is equal to
$$
\bb{D}_X(z) = U \setminus \{ z = y + 1 = 0 \}.
$$
Therefore, $U$ cannot be covered by principal $\alpha$-cylinders.
\end{exam}

The final example is a $\Ga$-action $\alpha$ on $\bb{A}^4$ such that $\bb{D}_{\bb{A}^4} \big( \pl(\der) \big)$ is not an $\alpha$-cylinder. By Corollaries~\ref{UC.co} and~\ref{PPI.co} we would need the plinth ideal $\pl(\der)$ to be non-principal, hence no such actions occur for $\bb{A}^3$.

\begin{exam}
Consider an LND
$$
\der = u \frac{\der}{\der x} + v \frac{\der}{\der y} \in
\LND(\bb{K}[x, y, u, v]).
$$
The corresponding $\Ga$-action $\alpha$ on $\bb{A}^4$ is
$$
s \cdot (x, y, u, v) = (x + s u, \ y + s v, \ u, \ v).
$$
One can check that
$$
\Ker(\der) = \bb{K}[u, \ v, \ x v - y u]
\insertText{and}
\pl(\der) = \Ker(\der) \cdot u + \Ker(\der) \cdot v.
$$
Using an argument similar to the one from Example~\ref{FP.ex}, we can show that the cylinder $U =$
\linebreak
$= \bb{D}_{\bb{A}^4} \big( \pl(\der) \big) \cong \bb{A}^2 \times \big( \bb{A}^2 \setminus \{(0, 0)\} \big)$ is not an $\alpha$-cylinder. Denote $V = \bb{D}_{\bb{A}^4}(u)$. An isomorphism $U \cong \bb{A}^1 \times Z$ making $U$ an $\alpha$-cylinder would define a level function, which is regular on $\bb{A}^4$ since the codimension of $\bb{A}^4 \setminus U$ is greater than one. This level function restricts to a slice of $\der$ on $\bb{K}[V] = \bb{K}[x, y, u, u^{-1}, v]$. Every such slice has the form
$$
\frac{x}{u} + h(u, \ u^{-1}, \ v, \ x v - y u)
$$
for some polynomial $h$. This is a contradiction with regularity of the level function.
\end{exam}

%%%%%%%%%%%%%%%%%%%%%%%%%%%%%%

{}

%%%%%%%%%%%%%%%%%%%%%%%%%%%%%%

\end{document}